\numberwithin{equation}{section}
\newtheorem{theorem}{Theorem}[section]
\newtheorem{lemma}[theorem]{Lemma}
\newtheorem{prop}[theorem]{Proposition}
\newtheorem{remark}[theorem]{Remark}
\theoremstyle{definition}
\begin{document}
	
	\thispagestyle{empty}
	
	\vspace*{1cm}
	
	\begin{center}
		
		{\LARGE\bf On the spectral gap of the path graph in the limit of large volume} \\

		\vspace*{2cm}
		
		{\large Joachim Kerner \footnote{E-mail address: {\tt Joachim.Kerner@fernuni-hagen.de}}}%
		
		\vspace*{5mm}
		
		Department of Mathematics and Computer Science\\
		FernUniversit\"{a}t in Hagen\\
		58084 Hagen\\
		Germany\\
		\vspace*{2cm}

		{\large Pavlo Yatsyna \footnote{E-mail address: {\tt yatsyna@karlin.mff.cuni.cz}} }%
		
		\vspace*{5mm}
		
		Faculty of Mathematics and Physics, Department of Algebra\\
		Charles University\\ 
		Sokolov\-sk\' a 83, 18600 Praha~8\\ 
		Czech Republic\\
		\vspace*{2cm}

	\end{center}
	
	\vfill
	
	\begin{abstract} In this paper we study the spectral gap of the path graph and illustrate an interesting effect which has been described recently in the continuous setting. More explicitly, in the large-volume limit and in the presence of a certain external potential, it is shown that the spectral gap converges to zero strictly faster than it does for the free Laplacian. The underlying mechanism is a combination of the increase in volume and an effective degeneracy of the ground state in the limiting regime.
	\end{abstract}
	
	\newpage
	
	\section{Introduction}
	
	In this paper we study the spectral gap of a well-known discrete graph, the so-called path graph, in a limit where the number of vertices tends to infinity. Our study is motivated by an interesting observation made recently in~\cite{KernerTaufer} in the case of one-dimensional Schrödinger operators. There, it was realized that the spectral gap of the Laplacian plus some non-negative potential of compact support closes strictly faster in the limit of large volume than the spectral gap of the Laplacian without an additional external potential. In other words, although the potential is identically zero on an increasing part of the configuration space, it still manifests itself in a different asymptotic behaviour of the spectral gap. Intuitively, this somewhat surprising effect is the result of two different contributions: On the one hand, increasing the volume of the system implies that the two lowest eigenvalues converge to zero which leads to a convergence of the spectral gap. On the other hand, the external potential is becoming a higher and higher barrier (for the particles occupying the first eigenstates, since their energies converge to zero) and therefore leads to an effective decoupling of configuration space into two (congruent) parts. Consequently, since the ground state of an operator defined over the disjoint union of two copies of the same domain is degenerate, this effective degeneracy of the ground state in the limit of large volume adds to the decay of the spectral gap. With this paper it is our aim to illustrate this effect in the context of discrete graphs, seemingly for the first time.  
	
	Finally, for related results on the spectral gap in the continuous case let us refer to the classical papers \cite{KirschGapII,KirschGap, AB,Abramovich,Lavine} and the more recent ones \cite{AK,ACH,KernerCompact}.

	\section{The model}
	
	We study the path graph $G_k=(V_k,E_k)$ with an odd number of vertices $|V_k|=2k+1$, $k \in \mathbb{N}$. We recall that all vertices of $G_k$ have exactly two neighbours except for the outer two vertices that have one neighbouring vertex only. It is convenient in the following to numerate the vertices according to the set $V_k:=\{-k,...,0,...,+k\}$. We remark that $E_k$ refers to the edge set of the graph.

	The underlying (finite-dimensional) Hilbert space is given by $\mathcal{H}=\mathbb{C}^{|V_k|}$. On $\mathcal{H}$ we then introduce the standard (weighted) discrete Laplacian operator $\mathcal{L}_{\gamma}:\mathbb{C}^{|V_k|} \rightarrow \mathbb{C}^{|V_k|}$ defined via 
	\begin{equation*}
	(\mathcal{L}_{\gamma}f)(v):=\sum_{w \in V_k: |v-w|=1}\gamma_{wv}(f(v)-f(w))\ , \quad v \in V_k\ , \quad f \in \mathcal{H}\ .
	\end{equation*} 
	Here $\gamma_{wv}=\gamma_{vw} \in (0,\infty)$ are real-valued and positive edge weights. The \textit{unweighted} path graph is obtained through setting $\gamma_{vw}= 1$ for all $v,w \in V_k$. The quadratic form associated with $\mathcal{L}_{\gamma}$ is defined by
	\begin{equation*}\label{QForm}
	q_{w}[f]:=\frac{1}{2}\sum_{i,j \in V_k}w_{ij}|f(i)-f(j)|^2\ , \qquad f \in \mathcal{H}\ ,
	\end{equation*}
	 with weights $w_{ij}$ defined by
	\begin{equation*}w_{ij}=:\begin{cases} \gamma_{ij} \quad \text{for} \quad |i-j|=1\ , \\
	0 \quad \text{else}\ .
	\end{cases}
	\end{equation*}
	Since the Laplacian $\mathcal{L}_{\gamma}$ is a self-adjoint (non-negative) operator on a finite-dimensional Hilbert space, its spectrum consists of (non-negative) eigenvalues only. Furthermore, inserting the vector $f=(1,...,1)^T$ in $q_{w}[\cdot]$ shows that the lowest eigenvalue of $\mathcal{L}_{\gamma}$ is indeed zero. 
	
	At various points in the paper we want to introduce a compactly supported and non-negative external potential which shall be located only at the zero vertex. This means that the quadratic form then reads 
	\begin{equation*}\label{QFormII}
	q_{w,u}[f]:=\frac{1}{2}\sum_{i,j \in V_k}w_{ij}|f(i)-f(j)|^2+u|f(0)|^2\ , \qquad f \in \mathcal{H}\ ,
	\end{equation*}
	with $u \geq 0$ representing the external potential at zero vertex. The associated self-adjoint operator then becomes $H_{\gamma}:=\mathcal{L}_{\gamma}+u\delta_0$ where $(\delta_0f):=f(0)$ is the $\delta$-function. Again, the operator $H_{\gamma}$ is non-negative and its spectrum consists of non-negative eigenvalues only. We write them as $\lambda_0(V_k,w,u) \leq \lambda_1(V_k,w,u) \leq ... \leq \lambda_{|V_k|-1}(V_k,w,u) $. 
	
	Finally, let us introduce the central object of interest in this paper: the spectral gap $\Gamma(V_k,w,u)$ which is defined as
	\begin{equation}
	\Gamma(V_k,w,u):=\lambda_1(V_k,w,u)-\lambda_0(V_k,w,u)\ .
	\end{equation}
	We remark that, for each value of $k \in \mathbb{N}$, $\Gamma(V_k,w,u) > 0$ since the ground state is non-degenerate (compare with Proposition~\ref{SymmetryGS}). Also, for more details on the spectral theory of graphs we refer to~\cite{Chung,Brouwer}.
	\section{Main results}
	In this section we study the spectral gap $\Gamma(V_k,w,u)$ in more detail. We start with the following statement which characterizes convergence of the spectral gap in the case of vanishing potential ($u=0$) and with weights $\gamma_{vw}=1$, $v,w \in V_k$.
	\begin{prop}\label{PropPavloI} Consider a path graph with Hamiltonian $H_{\gamma}$, $u=0$ and edge weights $\gamma_{vw}= 1$, $v,w \in V_k$. Then, 
		\begin{equation*}
	\lim_{k \rightarrow \infty}	|V_k|^2 \cdot  \Gamma(V_k,w,u)=\pi^2\ .
		\end{equation*}
		Furthermore, whenever $u > 0$ and $\gamma_{vw}=1$ for $v,w \in V_k$, one has 
		\begin{equation*}
		\lim_{k \rightarrow \infty}	|V_k|^2 \cdot  \lambda_1(V_k,w,u)=\pi^2\ .
		\end{equation*}
	\end{prop}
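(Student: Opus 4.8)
The plan is to reduce both statements to the explicit spectrum of the unit-weight path Laplacian $\mathcal{L}:=\mathcal{L}_\gamma$ (with $\gamma_{vw}\equiv 1$) and, in the case $u>0$, to prove the exact identity $\lambda_1(V_k,w,u)=2-2\cos(\pi/|V_k|)$. Write $n:=|V_k|=2k+1$. First I would record the standard fact that $\mathcal{L}$ is the tridiagonal path matrix with free (degree-one) ends, whose eigenvalues are $\lambda_j(\mathcal{L})=2-2\cos(j\pi/n)$, $j=0,\dots,n-1$, with eigenvectors $v\mapsto\cos\!\big(j\pi(v+k+\tfrac12)/n\big)$; this follows from the three-term recurrence together with the boundary relation at the two leaves. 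In particular $\lambda_0(\mathcal{L})=0$ (the constant vector) and $\lambda_1(\mathcal{L})=2-2\cos(\pi/n)=4\sin^2(\pi/(2n))$. For the first assertion ($u=0$) this already gives $\Gamma(V_k,w,0)=\lambda_1(\mathcal{L})-0=4\sin^2(\pi/(2n))$, so that $n^2\,\Gamma=4n^2\sin^2(\pi/(2n))\to\pi^2$ by $\sin t=t+O(t^3)$, which is the claim.

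For $u>0$ I would establish $\lambda_1(V_k,w,u)=2-2\cos(\pi/n)$ by two matching bounds and then take the same limit. The lower bound is immediate: since $H_\gamma-\mathcal{L}=u\delta_0$ is non-negative, the min--max principle gives $\lambda_1(V_k,w,u)\ge\lambda_1(\mathcal{L})=2-2\cos(\pi/n)$. For the upper bound I would use the reflection $v\mapsto-v$, which commutes with $H_\gamma$ and hence splits $\mathcal{H}$ into even and odd parts. Let $\psi(v)=\sin(\pi v/n)$ be the odd eigenvector of $\mathcal{L}$ for $\lambda_1(\mathcal{L})$; since $\psi(0)=0$ the potential does not act on it and $H_\gamma\psi=\lambda_1(\mathcal{L})\psi$. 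I then fold it, setting $\widetilde\psi:=|\psi|$, an even vector still vanishing at $0$. Because $\psi$ keeps a constant sign on each half and is zero at the centre, the consecutive differences of $\widetilde\psi$ equal those of $\psi$ up to sign; hence $q_w[\widetilde\psi]=q_w[\psi]$ and $\|\widetilde\psi\|=\|\psi\|$, and as $\widetilde\psi(0)=0$ we get $\langle\widetilde\psi,H_\gamma\widetilde\psi\rangle=\lambda_1(\mathcal{L})\|\widetilde\psi\|^2$. Testing the min--max characterisation of $\lambda_1$ on $W:=\mathrm{span}\{\psi,\widetilde\psi\}$ — a genuinely two-dimensional space, since opposite parities are orthogonal — and noting $\langle\psi,H_\gamma\widetilde\psi\rangle=\lambda_1(\mathcal{L})\langle\psi,\widetilde\psi\rangle=0$, the compression of $H_\gamma$ to $W$ is the scalar $\lambda_1(\mathcal{L})\,\mathrm{Id}_2$. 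Therefore $\max_{0\ne f\in W}\langle f,H_\gamma f\rangle/\|f\|^2=\lambda_1(\mathcal{L})$, giving $\lambda_1(V_k,w,u)\le\lambda_1(\mathcal{L})$. The two bounds yield the identity, and $n^2\lambda_1(V_k,w,u)=4n^2\sin^2(\pi/(2n))\to\pi^2$ as before.

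The standard ingredients — the spectrum of $\mathcal{L}$, eigenvalue monotonicity, and the elementary limit — are routine; the one step I would be most careful about is the folded test vector $\widetilde\psi=|\psi|$. It reproduces the odd ground energy as an even state at no potential cost, and the vanishing of the cross term $\langle\psi,H_\gamma\widetilde\psi\rangle$ is precisely what makes the $2\times2$ compression scalar, thereby pinning $\lambda_1$ to its unperturbed value for every $u$. This is the discrete incarnation of the \emph{effective degeneracy} from the introduction: the lowest even and odd states sit at the same energy $\lambda_1(\mathcal{L})$, and only the strictly positive value of the even ground state at the origin — removed by the folding — pushes $\lambda_0$ below $\lambda_1$, so that $\lambda_1$ itself remains exactly $2-2\cos(\pi/n)$.
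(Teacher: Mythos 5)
Your proof is correct. For the first statement ($u=0$) it coincides with the paper's: both rest on the explicit spectrum $2-2\cos(j\pi/|V_k|)$ of the unweighted path Laplacian and the elementary limit. For the second statement ($u>0$) you take a genuinely different route. The paper's proof is by Cauchy interlacing: deleting the zero vertex from $H_\gamma$ (which removes the potential along with it) leaves two disjoint copies of a $k$-vertex path with one Dirichlet end, so the smallest eigenvalue $\mu_0=2-2\cos(\pi/|V_k|)$ of the reduced matrix is doubly degenerate, and the interlacing chain $\lambda_0\le\mu_0\le\lambda_1\le\mu_1=\mu_0$ pins down $\lambda_1(V_k,w,u)=\mu_0$ exactly. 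You instead prove the same exact identity variationally: monotonicity $H_\gamma\ge\mathcal{L}$ for the lower bound, and for the upper bound the two-dimensional trial space spanned by the odd eigenvector $\psi(v)=\sin(\pi v/|V_k|)$ (untouched by the potential since $\psi(0)=0$) and its folding $|\psi|$, on which the Rayleigh quotient is identically $\lambda_1(\mathcal{L})$. Your route is more self-contained --- it needs no interlacing theorem and no spectrum of the Dirichlet path, only the second eigenvector of $\mathcal{L}$ and min--max --- and it makes explicit the effective-degeneracy mechanism (an even and an odd state at the same energy, both vanishing at the origin) that drives the whole paper; the paper's route is shorter given the cited machinery. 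One presentational caution in yours: $|\psi|$ is \emph{not} an eigenvector of $H_\gamma$ (its Laplacian at the zero vertex equals $-2\psi(1)\neq 0$), so the vanishing of the cross term $\langle\psi,H_\gamma|\psi|\rangle$ must be justified by self-adjointness, moving $H_\gamma$ onto $\psi$ and then using that even and odd vectors are orthogonal --- this is exactly the computation you indicate, but it should be stated in that order. Both proofs in fact yield the stronger statement $\lambda_1(V_k,w,u)=2-2\cos(\pi/|V_k|)$ for every $u>0$, from which the claimed limit is immediate.
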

\begin{proof} In a first step we observe that $\lambda_0(V_k,w,u)=0$ since $u=0$. On the other hand, for the second eigenvalue one has the explicit expression~\cite{Brouwer}
	$$\lambda_1(V_k,w,u)=2-2\cos\left(\frac{\pi}{|V_k|} \right)\ . $$
From this the first statement follows immediately.

The second statement is a direct consequence of standard interlacing theorems~\cite{Brouwer}. Deleting the zero vertex, we find two copies of a graph with the smallest eigenvalue $2-2\cos\left(\frac{\pi}{|V_k|} \right)$ and which equals $\lambda_1(V_k,w,u)$.
\end{proof}
\begin{remark} Proposition~\ref{PropPavloI} characterizes the behaviour of the spectral gap of our reference operator, namely, the free (meaning zero external potential) discrete Laplacian of the unweighted path graph.
	
	Also, Proposition~\ref{PropPavloI} implies that the spectral gap always decreases when adding a potential to the zero vertex. In other words, the spectral gap does not increase! This is somewhat interesting when compared with related results in the continuum case \cite{AB,Abramovich,Lavine}. Here, the spectral gap might increase or decrease, depending on properties of the potential such as convexity. Of course, the situation might be different in the case of more complex potentials or graphs.
\end{remark}
In a next step we collect some results that characterize in more detail the ground state eigenvector $u_0=(u_0(-k),...,u_0(+k))^T \in \mathcal{H}$ of the operator $\mathcal{H}_{\gamma}$. Note that we shall always assume in this paper that $u_0$ is normalized, i.e., $\|u_0\|_{\mathcal{H}}=1$. Also, for notational simplicity, we shall not state the $k$-dependence of $u_0$ explicitly.
\begin{prop}[Properties of the ground state]\label{SymmetryGS} Consider a path graph with Hamiltonian $H_{\gamma}$ and double-symmetric edge weights, i.e., $\gamma_{vw}=\gamma_{wv}$ and $\gamma_{vw}=\gamma_{-v-w}$ for $v,w \in V_k$. Then $u_0$ is non-degenerate, $u_0(v)$ can be chosen real-valued and positive for all $v \in V_k$. Also $u_0(v)=u_0(-v)$, i.e., the ground state is symmetric. In addition, the ground state is monotonic, i.e., $u_0(n) \leq u_0(n+1)$ for $n \geq 0$.
\end{prop}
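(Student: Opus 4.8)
The plan is to establish the four assertions in the order stated, since the later ones build on the earlier. First, for non-degeneracy and positivity I would invoke a Perron--Frobenius argument. In the standard basis the matrix of $H_\gamma$ has off-diagonal entries $-\gamma_{vw}\le 0$, strictly negative precisely on the edges of $G_k$, and a non-negative diagonal. Hence for $c>0$ large the matrix $A:=cI-H_\gamma$ has non-negative entries, and it is irreducible because $G_k$ is connected. The Perron--Frobenius theorem then gives that the largest eigenvalue of $A$ --- which corresponds to the smallest eigenvalue $\lambda_0$ of $H_\gamma$, as the two operators share eigenvectors and their eigenvalues are related by $\mu\mapsto c-\mu$ --- is simple and admits an eigenvector with strictly positive entries. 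This yields both non-degeneracy of $u_0$ and the fact that it can be chosen with $u_0(v)>0$ for all $v$. (Equivalently one argues variationally: $|u_0|$ is again a minimizer of the Rayleigh quotient since $\bigl||u_0(i)|-|u_0(j)|\bigr|\le|u_0(i)-u_0(j)|$, and the eigenvalue equation together with connectivity forbids a vanishing entry.)

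Second, for the symmetry $u_0(v)=u_0(-v)$ I would introduce the reflection $(Rf)(v):=f(-v)$ on $\mathcal{H}$. The hypothesis $\gamma_{vw}=\gamma_{-v-w}$ together with the localization of the potential at the reflection-fixed vertex $0$ gives $RH_\gamma=H_\gamma R$; this is a short direct check on the defining formula for $H_\gamma$ after the substitution $w\mapsto -w$ in the neighbour sum. Because $u_0$ is non-degenerate, it must be an eigenvector of $R$, and since $R^2=\mathrm{id}$ the eigenvalue is $\pm1$. As $Ru_0$ is again a positive normalized ground state it must equal $u_0$, ruling out the eigenvalue $-1$; hence $u_0(-v)=u_0(v)$.

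Third, and this is the genuinely computational part, for monotonicity I would work with the eigenvalue equation $H_\gamma u_0=\lambda_0 u_0$ on the right half $v\in\{1,\dots,k\}$, where the potential is absent. Introducing the discrete flux $J_v:=\gamma_{v,v+1}(u_0(v+1)-u_0(v))$ for $0\le v\le k-1$, the interior equations become the telescoping relation $J_{v-1}-J_v=\lambda_0 u_0(v)$, while the equation at the endpoint $k$ (whose single neighbour is $k-1$) reads $J_{k-1}=\lambda_0 u_0(k)$. Summing downward yields the closed form $J_v=\lambda_0\sum_{j=v+1}^{k}u_0(j)$, which is non-negative because $\lambda_0\ge0$ and, by the first step, $u_0(j)>0$. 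Thus $u_0(v+1)\ge u_0(v)$ for every $0\le v\le k-1$, the claimed monotonicity, strict as soon as $\lambda_0>0$, i.e.\ whenever $u>0$.

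I expect the symmetry step to be the conceptually cleanest and the positivity step to be entirely standard; the main obstacle is the bookkeeping in the monotonicity argument --- choosing the right auxiliary quantity (the flux $J_v$) and anchoring the telescoping recursion at the Neumann-type endpoint $v=k$ rather than at the potential-bearing vertex $0$. Once $J_v$ is recognized as a $\lambda_0$-weighted tail sum of the strictly positive vector $u_0$, monotonicity follows at once, and I note that this last step needs only positivity and $\lambda_0\ge0$, not the full double-symmetry hypothesis.
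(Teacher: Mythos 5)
Your proof is correct, but it takes a genuinely different route from the paper's on each of the three claims, and in one place it is substantially more complete. For positivity and non-degeneracy the paper argues variationally: the minimizer of the Rayleigh quotient can be replaced by its entrywise absolute value, so all entries of $u_0$ have one sign, and two positive vectors cannot be orthogonal; your Perron--Frobenius argument applied to $cI-H_{\gamma}$ reaches the same conclusion and delivers \emph{strict} positivity with less fuss (the variational route needs the eigenvalue equation plus connectivity to exclude zero entries, a point you correctly flag in your parenthetical and which the paper glosses over). For symmetry, the paper does not use a commuting reflection operator; it cuts $u_0$ at the vertex $0$, reflects each half to a vector on the whole graph, and argues that if $u_0$ were asymmetric, one of these reflected vectors would be a second minimizer of the Rayleigh quotient, contradicting non-degeneracy. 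Your argument --- $RH_{\gamma}=H_{\gamma}R$, non-degeneracy forces $Ru_0=\pm u_0$, positivity forces the sign $+$ --- is the more standard symmetry argument and avoids the bookkeeping of splitting the quadratic form at vertex $0$. The largest divergence is monotonicity: the paper dispatches it in one sentence (``the minimization-property of the ground state also directly implies monotonicity''), offering no actual argument, whereas your flux identity $J_v=\lambda_0\sum_{j=v+1}^{k}u_0(j)\geq 0$, anchored at the degree-one endpoint $k$, is a complete and rigorous proof; it moreover yields strict monotonicity whenever $u>0$ and makes explicit that neither the double symmetry of the weights nor the symmetry of $u_0$ is needed for this part. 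In short, your proposal is sound and fills in precisely the step the paper leaves vague.
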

\begin{proof} First, since $H_{\gamma}$ is self-adjoint, we may restrict ourselves to real-valued eigenstates. Furthermore, since the ground state is a minimizer of the Rayleigh quotient \cite{Brouwer}, one concludes that all $u_0(v)$ have the same sign (and we choose them to be positive). Positivity then shows that $u_0$ is non-degenerate since two positive eigenvectors cannot be orthogonal to each other. Similarly, the minimization-property of the ground state also directly implies monotonicity. 
	
	Finally, assume that $u_0$ is not symmetric with respect to the zero vertex. Then, you may consider the restrictions of $u_0$ to the vertices $\{-k,...,0\}$ and $\{0,...,k\}$ and extend them in each case to the whole path graph by reflection. Then, for at least one of those vectors, the corresponding Rayleigh quotient  equals the ground-state eigenvalue. However, since the ground state is non-degenerate, this is not possible and hence $u_0$ has to be symmetric. 
 \end{proof}
Recall that the ground-state eigenvalue equals zero for the unweighted path graph without external potential. In the case of non-vanishing external potential, we now aim to show that $\lambda_0(V_k,w,u)$ cannot converge to zero too fast.
\begin{prop}\label{PropGroundStateII} Consider a path graph with Hamiltonian $H_{\gamma}$, $u > 0$ and edge weights $\gamma_{vw}=1$ for $v,w \in V_k$. Then, there exists a constant $c > 0$ such that 
	\begin{equation*}
	c \leq |V_k|^2 \cdot \lambda_0(V_k,w,u)
 	\end{equation*}
 	holds for all $k \in \mathbb{N}$.
\end{prop}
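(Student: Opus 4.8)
The plan is to establish the claim as a variational \emph{lower} bound on the ground-state energy. By the Rayleigh--Ritz principle,
\begin{equation*}
\lambda_0(V_k,w,u)=\min_{f \neq 0}\frac{q_{w,u}[f]}{\|f\|_{\mathcal{H}}^2}\ ,
\end{equation*}
so it suffices to produce a constant $c>0$, depending only on $u$, such that $q_{w,u}[f]\geq (c/|V_k|^2)\,\|f\|_{\mathcal{H}}^2$ holds for every $f\in\mathcal{H}$ and every $k$; evaluating at the minimizer (the ground state $u_0$ of Proposition~\ref{SymmetryGS}) then yields the assertion. The idea is to exploit the two non-negative contributions to $q_{w,u}$ separately: the potential term $u|f(0)|^2$ pins down the value of an admissible function at the central vertex, while the Dirichlet energy $\sum_n|f(n+1)-f(n)|^2$ limits how far $f$ can drift away from that value.

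First I would fix $f$ with $\|f\|_{\mathcal{H}}=1$ and abbreviate $\lambda:=q_{w,u}[f]$, so that both $u|f(0)|^2\leq\lambda$ and $\sum_{n=-k}^{k-1}|f(n+1)-f(n)|^2\leq\lambda$. The central estimate is a discrete \emph{fundamental theorem of calculus}: writing $f(n)-f(0)$ as the telescoping sum of nearest-neighbour differences and applying the Cauchy--Schwarz inequality gives
\begin{equation*}
|f(n)-f(0)|^2\leq|n|\sum_{j=-k}^{k-1}|f(j+1)-f(j)|^2\leq|n|\,\lambda\ ,\qquad n\in V_k\ .
\end{equation*}
Combining this with the elementary inequality $|f(n)|^2\leq 2|f(0)|^2+2|f(n)-f(0)|^2$ and the potential bound $|f(0)|^2\leq\lambda/u$ produces the uniform pointwise control $|f(n)|^2\leq 2\lambda/u+2k\lambda$ for all $n\in V_k$.

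The final step is to sum this over all $2k+1$ vertices. Since $\|f\|_{\mathcal{H}}^2=1$, this gives $1\leq(2k+1)\bigl(2\lambda/u+2k\lambda\bigr)=2\lambda\,(2k+1)(k+1/u)$, and crudely bounding $(2k+1)(k+1/u)\leq(1+1/u)\,|V_k|^2$ leads to $\lambda\geq\bigl(2(1+1/u)\bigr)^{-1}|V_k|^{-2}$. This is precisely the desired form inequality with $c=u/\bigl(2(u+1)\bigr)$, which completes the argument.

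I expect the only delicate point to be the bookkeeping of the $k$-dependence: the factor $|n|\leq k$ generated by Cauchy--Schwarz combines with the factor $2k+1$ arising from the sum over vertices to produce exactly the quadratic scaling $|V_k|^2$, while the constant remains uniform in $k$ and depends only on $u$. The bound $|f(n)-f(0)|^2\leq|n|\lambda$ is of course wasteful, as it spends the entire Dirichlet budget on a single vertex, but this crudeness is harmless for a lower bound of the stated order and is in fact what makes the $|V_k|^2$ scaling transparent. Conceptually, the mechanism is a discrete Hardy-type inequality reflecting that a strictly positive potential at the origin behaves, in the large-volume limit, like an effective Dirichlet condition there.
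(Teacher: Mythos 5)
Your proof is correct, and it takes a genuinely different route from the paper. The paper argues by contradiction: it assumes $|V_{k_j}|^2\lambda_0(V_{k_j},w,u)\to 0$ along a subsequence, deduces from the potential term that $|V_{k_j}|\,u_0(0)\to 0$, sets the ground state's zeroth component to zero, and compares the resulting Rayleigh quotient with the lowest eigenvalue $\mu_0$ of the path graph with a Dirichlet vertex at $0$, for which the bound $\tilde c\leq |V_k|^2\mu_0$ is invoked as well known; the contradiction follows. Your argument is instead a direct, self-contained functional inequality: the potential pins $|f(0)|^2\leq\lambda/u$, the telescoping-plus-Cauchy--Schwarz estimate $|f(n)-f(0)|^2\leq |n|\lambda$ controls the drift, and summing over the $2k+1$ vertices yields $q_{w,u}[f]\geq \frac{u}{2(u+1)}|V_k|^{-2}\|f\|^2_{\mathcal H}$ for \emph{every} $f\in\mathcal H$. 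All steps check out (note that both non-negative pieces of $q_{w,u}$ are separately bounded by $\lambda$, which is exactly what you use), and the crude bound $(2k+1)(k+1/u)\leq(1+1/u)|V_k|^2$ is valid. What your approach buys: it is not a proof by contradiction, it gives an explicit constant $c=u/(2(u+1))$ with transparent $u$-dependence, it holds for every $k\in\mathbb{N}$ rather than only in a limiting regime, and it does not rely on the unproved external input (the Dirichlet-vertex eigenvalue bound) nor on any structural properties of the ground state from Proposition~\ref{SymmetryGS}. What the paper's approach buys: it is shorter modulo that known input, and it directly exhibits the mechanism the paper is built around, namely that a potential at the origin acts asymptotically like a Dirichlet condition there --- a heuristic your closing remark about the discrete Hardy-type inequality captures as well.
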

\begin{proof} Assuming the statement does not hold, there exists a subsequence $({k_j})_{j \in \mathbb{N}}$ along which $|V_{k_j}|^2 \cdot \lambda_0(V_{k_j},w,u) \rightarrow 0$. This then implies that 
	\begin{equation*}
	|V_{k_j}|^2 \cdot u_0(0)^2 \rightarrow 0\ 
	\end{equation*}
	along the corresponding subsequence of (normalized) ground states. Consequently, defining $\tilde{u}_0$ to be the vector which agrees with $u_0$ for all $k \neq 0$ and is such that $\tilde{u}_0(0)=0$, one has (again along the subsequence)
	\begin{equation*}
	|V_{k_j}|^2\cdot \mu_0(V_{k_j},w,u) \leq |V_{k_j}|^2\cdot \frac{q_{w,u}[\tilde{u}_0]}{\|\tilde{u}_0\|^2_{\mathcal{H}}}
	\end{equation*}
	where $\mu_0(V_{k_j},w,u)$ denotes the lowest eigenvalues of the unweighted path graph with a Dirichlet vertex at $k=0$. However, for such a graph it is well-known that there exists a constant $\tilde{c} > 0$ with $\tilde{c} \leq |V_k|^2\cdot \mu_0(V_k,w,u)$ for all $k \in \mathbb{N}$. Consequently, one arrives at a contradiction.
\end{proof}
In a next statement we aim to show that the ground state value at zero vertex converges, in the presence of an external potential, faster to zero than in the case of vanishing external potential. Recall that, whenever $u=0$, one has $u_0(0)=\frac{1}{\sqrt{|V_k|}}$. 
\begin{lemma}\label{PropXXX} Consider a path graph with Hamiltonian $H_{\gamma}$, $u > 0$ and edge weights $\gamma_{vw}=1$ for $v,w \in V_k$. Then 
	$$\lim_{k \rightarrow \infty}|V_k|\cdot u_0(0)=0\ . $$
\end{lemma}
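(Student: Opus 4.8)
The plan is to extract $u_0(0)$ from a single global identity rather than to diagonalise $H_{\gamma}$ explicitly. First I would write the eigenvalue equation $H_{\gamma}u_0=\lambda_0(V_k,w,u)\,u_0$ componentwise and sum it over all $v\in V_k$. Because $\mathcal{L}_{\gamma}$ is a graph Laplacian, the off-diagonal contributions telescope: each edge $\{i,j\}$ produces the two terms $\gamma_{ij}(u_0(i)-u_0(j))$ and $\gamma_{ji}(u_0(j)-u_0(i))$, which cancel since $\gamma_{ij}=\gamma_{ji}$. Hence $\sum_{v\in V_k}(\mathcal{L}_{\gamma}u_0)(v)=0$, while the potential $u\delta_0$ contributes only at the origin, and I obtain the exact relation
\[
u\,u_0(0)=\lambda_0(V_k,w,u)\sum_{v\in V_k}u_0(v).
\]

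Next I would estimate the two factors on the right. By Proposition~\ref{SymmetryGS} every entry $u_0(v)$ is positive, so $S:=\sum_{v\in V_k}u_0(v)>0$; the Cauchy--Schwarz inequality together with the normalisation $\|u_0\|_{\mathcal{H}}=1$ then gives $S\le\sqrt{|V_k|}$. For the eigenvalue I would invoke $\lambda_0(V_k,w,u)\le\lambda_1(V_k,w,u)$ and the second part of Proposition~\ref{PropPavloI}, which yields $|V_k|^2\lambda_1(V_k,w,u)\to\pi^2$ and hence $\lambda_0(V_k,w,u)\le C|V_k|^{-2}$ for all large $k$. Substituting these bounds into the identity gives
\[
|V_k|\,u_0(0)=\frac{|V_k|}{u}\,\lambda_0(V_k,w,u)\,S\le\frac{1}{u}\,\lambda_0(V_k,w,u)\,|V_k|^{3/2}\le\frac{C}{u}\,|V_k|^{-1/2}\longrightarrow 0,
\]
which proves the claim and even furnishes the explicit rate $O(|V_k|^{-1/2})$.

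The genuine obstacle is not any single calculation but recognising that the obvious estimate is too weak. The Rayleigh-quotient bound $u\,u_0(0)^2\le q_{w,u}[u_0]=\lambda_0(V_k,w,u)=O(|V_k|^{-2})$ only gives $|V_k|\,u_0(0)=O(1)$, i.e.\ boundedness but not decay. The decisive step is therefore the passage from this quadratic inequality to the \emph{linear} identity $u\,u_0(0)=\lambda_0(V_k,w,u)\,S$: it is the extra factor $S\le\sqrt{|V_k|}$, rather than $\|u_0\|_{\mathcal{H}}=1$, that supplies the missing $|V_k|^{-1/2}$. As a cross-check one could instead solve the recurrence explicitly, writing $u_0(v)=C\cos(\theta_0(v-k-\tfrac12))$ with $\lambda_0=2-2\cos\theta_0$ and reducing the potential equation at the origin to $\cot(\theta_0(k+\tfrac12))=2\sin\theta_0/u$; its smallest root satisfies $\theta_0(k+\tfrac12)\to\pi/2$, and tracking the normalisation reproduces the same rate.
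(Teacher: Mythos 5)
Your proof is correct, and it takes a genuinely different and substantially shorter route than the paper. The paper argues by contradiction: it solves the three-term recurrence $u_0(n)=(2-\lambda_0)u_0(n-1)-u_0(n-2)$ explicitly in terms of the complex roots of $x^2-(2-\lambda_0)x+1$, tracks the amplitude and phase of the resulting oscillatory solution (using both Proposition~\ref{PropPavloI} and the lower bound of Proposition~\ref{PropGroundStateII} to control the phase), and derives a contradiction with the normalization $\|u_0\|_{\mathcal{H}}=1$. Your argument replaces all of this with the single exact identity $u\,u_0(0)=\lambda_0(V_k,w,u)\sum_{v}u_0(v)$, obtained by summing the eigenvalue equation and using that the constant vector lies in the kernel of the symmetric Laplacian $\mathcal{L}_{\gamma}$; Cauchy--Schwarz bounds the sum by $\sqrt{|V_k|}$, and the interlacing statement in Proposition~\ref{PropPavloI} supplies $\lambda_0\leq\lambda_1\leq C|V_k|^{-2}$ for large $k$. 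This buys you three things the paper's proof does not have: an explicit rate $|V_k|\,u_0(0)=O(|V_k|^{-1/2})$ rather than mere convergence to zero; independence from Proposition~\ref{PropGroundStateII} (you only need the upper bound on $\lambda_0$, not the lower bound) and from the symmetry/monotonicity parts of Proposition~\ref{SymmetryGS} (indeed even positivity is dispensable, since Cauchy--Schwarz applies to $\bigl|\sum_v u_0(v)\bigr|$); and no subsequence or contradiction bookkeeping. Your diagnosis of the key difficulty is also on point: the quadratic Rayleigh-quotient estimate $u\,u_0(0)^2\leq\lambda_0$ only yields boundedness of $|V_k|u_0(0)$, and the passage to the linear identity is exactly what converts the trivial bound into decay. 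What the paper's longer route buys in exchange is structural information about the eigenvector profile (its oscillatory form and near-constancy of modulus away from the potential), which your global identity deliberately discards.
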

\begin{proof} We prove the statement by contradiction and assume that $\lim_{j\rightarrow \infty}|V_{k_j}|\cdot |u_0(0)|=\delta > 0$ for some subsequence $(k_j)_{j \in \mathbb{N}}$.
	
	From the eigenvalue equation and with Proposition~\ref{SymmetryGS} we obtain the relation 
	\begin{equation*}\begin{split}
	2u_0(0)-u_0(-1)-u_0(+1)+u u_0(0)&=2u_0(0)-2u_0(+1)+u u_0(0)\\
	&=\lambda_0(V_k,w,u)u_0(0)
	\end{split}
	\end{equation*}
	which gives
	\begin{equation*}\begin{split}
	|V_k|u_0(+1)&=\left(1+\frac{u-\lambda_0(V_k,w,u)}{2}\right)|V_k|u_0(0) \ .
	\end{split}
	\end{equation*}
	More generally, using the eigenvalue equation we get 
	\begin{equation}\label{RecurrenceRelation}\begin{split}
	u_0(n)=(2-\lambda_0(V_k,w,u))u_0(n-1)-u_0(n-2)
	\end{split}
	\end{equation}
	for all $2 \leq n \leq k-1$. Relation~\eqref{RecurrenceRelation} is a linear recurrence relation and can be solved explicitly. To do this, we consider the two (complex) roots $\lambda_1,\lambda_2$ of the polynomial
	\begin{equation*}
	p(x):=x^2-(2-\lambda_0(V_k,w,u))x+1\ .
	\end{equation*} 
	We obtain, for $|V_k|$ large enough,
	\begin{equation*}
	\lambda_{1,2}=\frac{2-\lambda_0(V_k,w,u)}{2}\pm i \sqrt{1-\left(\frac{2-\lambda_0(V_k,w,u)}{2}\right)^2}\ .
	\end{equation*}
	The solution to \eqref{RecurrenceRelation} is then given by
	\begin{equation*}
	u_0(n)=\alpha_1 \lambda^{n+1}_1+\alpha_2 \lambda^{n+1}_2
	\end{equation*}
	where $\alpha_1, \alpha_2 \in \mathbb{C}$ are determined by the set of equations
	\begin{equation}\label{EqSys}\begin{split}
	u_0(0)&=\alpha_1 \lambda_1 + \alpha_2 \lambda_2\ , \\
	u_0(+1)&=\alpha_1 \lambda^2_1 + \alpha_2 \lambda^2_2 \ .
	\end{split}
	\end{equation}
Multiplying the first equation by $\lambda_2$ and subtracting the second equation yields
\begin{equation}\label{EqAlpha}\begin{split}
\alpha_1=\frac{\lambda_2 u_0(0)-u_0(+1)}{1-\lambda^2_1}\ .
\end{split}
\end{equation}
Also, from the explicit expression for $\lambda_1$ we conclude that there exists a uniform constant $c > 0$ such that, for $k$ large enough,
\begin{equation*}
|1-\lambda^2_1| \leq c \cdot \frac{1}{|V_k|}\ .
\end{equation*}
Hence, since $|V_{k_j}|u_0(0) \rightarrow \delta > 0$, eq.~\ref{EqAlpha} gives
\begin{equation*}\begin{split}
|V_{k_j}||\alpha_1| \geq \tilde{c} \cdot |V_{k_j}|
\end{split}
\end{equation*}
for some uniform constant $\tilde{c} > 0$ and $j \in \mathbb{N}$ large enough. Now, regarding the zero vertex, we also have that 
\begin{equation*}
2|V_{k_j}||\alpha_1| \cdot \cos(\widetilde{\alpha_1}+\widetilde{\lambda}_1) \rightarrow \delta > 0\ ,
\end{equation*}
where $\widetilde{\alpha_1} \in [0,2\pi]$ is the complex angle of $\alpha_1$ and $\widetilde{\lambda}_1 \in [0,2\pi]$ the complex angle of $\lambda_1$. Therefore, since $\lambda_1 \rightarrow 1$, we conclude that $\widetilde{\alpha_1} \rightarrow -\frac{\pi}{2}$. Also, regarding the $(k-p)$-th vertex, $p \in \mathbb{N}$ with $1 \leq p < k-2$, one has
\begin{equation}
|V_k|u_0(k-p)=2|V_k||\alpha_1| \cdot \cos\left(\widetilde{\alpha_1}+(k-p)\widetilde{\lambda}_1\right) \ .
\end{equation}
Now, combining Proposition~\ref{PropPavloI} and Proposition~\ref{PropGroundStateII}, one concludes that there exists a constant $\tilde{l} > 0$ such that, for an arbitrary $\varepsilon > 0$,
\begin{equation*}
0< \tilde{l} \leq (k_j-p)\widetilde{\lambda}_1 \leq \frac{\pi}{2}+\varepsilon
\end{equation*}
holds for all $p \leq \lfloor\sqrt{k_j}\rfloor$ whenever $j$ is large enough. Consequently, there exists a uniform constant $\beta > 0$ such that 
\begin{equation*}
|V_{k_j}|u_0(k_j-p) \geq \beta \cdot |V_{k_j}|
\end{equation*}
holds for all $p \leq \lfloor\sqrt{k_j}\rfloor$ whenever $j$ is large enough. On the other hand, normalization implies 
\begin{equation*}
\sum_{n={k_j}-\lfloor\sqrt{k_j}\rfloor}^{{k_j}-1} |u_0(n)|^2 \leq 1
\end{equation*}
from which we get
\begin{equation*}
\beta^2 \cdot (\lfloor\sqrt{k_j}\rfloor-1)\leq 1\ .
\end{equation*}
This is a contradiction for ${k_j} \in \mathbb{N}$ large enough and hence we conclude the statement.
 \end{proof}
	We are now in position to establish the main theorem of the paper. It is shown that the presence of an external potential indeed leads to a faster decay of the spectral gap in the limit of large volume.
	\begin{theorem}[Convergence of the spectral gap]\label{MainTheoremI} Consider a path graph with Hamiltonian $H_{\gamma}$, $u > 0$ and edge weights $\gamma_{vw}=1$ for $v,w \in V_k$. Then,
		\begin{equation*}
		\lim_{k \rightarrow \infty}|V_k|^2 \cdot \Gamma(V_k,w,u)=0\ .
		\end{equation*}
	\end{theorem}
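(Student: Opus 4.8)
The plan is to show that $\lambda_1(V_k,w,u)$ and $\lambda_0(V_k,w,u)$ have the same leading-order asymptotics after multiplication by $|V_k|^2$, so that their difference, scaled by $|V_k|^2$, vanishes. By the second part of Proposition~\ref{PropPavloI} we already know that $\lim_{k\to\infty}|V_k|^2\cdot\lambda_1(V_k,w,u)=\pi^2$. Therefore it suffices to prove that $\lim_{k\to\infty}|V_k|^2\cdot\lambda_0(V_k,w,u)=\pi^2$ as well, since then
\begin{equation*}
\lim_{k\to\infty}|V_k|^2\cdot\Gamma(V_k,w,u)=\pi^2-\pi^2=0\ .
\end{equation*}
The whole burden thus falls on pinning down the precise asymptotics of the ground-state eigenvalue in the presence of the potential.

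First I would use the structural information already assembled. By Proposition~\ref{SymmetryGS} the ground state $u_0$ is symmetric, positive and monotonic, and by Proposition~\ref{PropGroundStateII} we have the lower bound $c\le |V_k|^2\cdot\lambda_0(V_k,w,u)$, which rules out decay that is too fast. The decisive new input is Lemma~\ref{PropXXX}, which gives $|V_k|\cdot u_0(0)\to 0$; this says that, in the scaled picture, the ground state nearly vanishes at the central vertex. The idea is that a symmetric eigenvector vanishing (to leading order) at the origin behaves like a \emph{Dirichlet} eigenfunction on each half of the path, and the lowest Dirichlet mode on a segment of length $\sim k$ produces exactly the $\pi^2/|V_k|^2$ asymptotics. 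Concretely, I would compare $\lambda_0(V_k,w,u)$ with $\mu_0(V_k,w,u)$, the lowest eigenvalue of the half-path with a Dirichlet condition at the zero vertex, whose scaled limit is known to be $\pi^2$.

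The upper bound direction is the cleaner one: using the recurrence~\eqref{RecurrenceRelation} and the root analysis from the proof of Lemma~\ref{PropXXX}, the ground-state profile on the right half is $u_0(n)=2|\alpha_1|\cos(\widetilde{\alpha_1}+(n+1)\widetilde{\lambda}_1)$, with $\widetilde{\alpha_1}\to-\pi/2$ and $\widetilde{\lambda}_1\approx\sqrt{\lambda_0}$. The boundary behaviour at the outer vertex $v=k$ together with $\widetilde{\alpha_1}\to-\pi/2$ forces $k\widetilde{\lambda}_1$ to approach $\pi/2$, i.e. $(2k)\sqrt{\lambda_0}\to\pi$, which gives $|V_k|^2\lambda_0\to\pi^2$ from this side. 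The matching lower bound can be obtained from the interlacing/Rayleigh-quotient comparison already invoked in Proposition~\ref{PropGroundStateII}: feeding $\tilde u_0$ (the truncation of $u_0$ with value $0$ at the origin) into the Dirichlet quadratic form shows $\mu_0(V_k,w,u)\le q_{w,u}[\tilde u_0]/\|\tilde u_0\|^2$, and since Lemma~\ref{PropXXX} guarantees that removing the central value costs only a negligible $o(|V_k|^{-2})$ amount of both numerator and normalization, one gets $|V_k|^2\lambda_0\ge\pi^2-o(1)$.

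The main obstacle I anticipate is controlling the comparison between $\lambda_0$ (Neumann-type at the boundary, but with the potential-induced near-Dirichlet behaviour at the centre) and the genuine Dirichlet eigenvalue $\mu_0$ uniformly in $k$, so that the cross terms and the effect of the single modified vertex are both truly $o(|V_k|^{-2})$ after scaling. In particular one must verify that the smallness of $|V_k|\cdot u_0(0)$ from Lemma~\ref{PropXXX} translates into the correct order of smallness for the \emph{eigenvalue} gap between the two problems; a crude bound on $u_0(0)$ alone is not obviously enough, and the argument likely needs the finer phase information $\widetilde{\alpha_1}\to-\pi/2$ established inside the proof of Lemma~\ref{PropXXX}. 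Once that phase control is in hand, squeezing $|V_k|^2\lambda_0$ between $\pi^2-o(1)$ and $\pi^2+o(1)$ completes the proof.
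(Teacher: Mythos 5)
Your proposal is correct in substance and, at its core, rests on the same mechanism as the paper's proof: both hinge on Lemma~\ref{PropXXX}, truncate the normalized ground state to $\tilde u_0$ (setting $u_0(0)=0$), and exploit that this truncation changes the Rayleigh quotient only by $o(|V_k|^{-2})$, so that a quantity with known $\pi^2/|V_k|^2$ asymptotics gets squeezed against $\lambda_0$. The packaging differs, though. The paper argues by contradiction and never computes the limit of $|V_k|^2\lambda_0$: it bounds $\lambda_1$ directly by the Rayleigh quotient of $\tilde u_0$ via the min-max principle on the two-dimensional subspace spanned by the left/right restrictions of $\tilde u_0$ (using the symmetry from Proposition~\ref{SymmetryGS}). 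You instead run a direct argument through the Dirichlet eigenvalue $\mu_0$, combining $\lambda_1=\mu_0$ (the interlacing fact behind the second part of Proposition~\ref{PropPavloI}) with $\mu_0\le q_{w,u}[\tilde u_0]/\|\tilde u_0\|_{\mathcal{H}}^2$, which buys you the stronger conclusion $|V_k|^2\lambda_0\to\pi^2$; the two routes produce essentially the same inequality chain. Two clean-ups would make your write-up complete. First, your ``upper bound direction'' via the phase/root analysis at the outer vertex is both the shakiest step and entirely unnecessary: $\lambda_0\le\lambda_1$ together with Proposition~\ref{PropPavloI} already gives $\limsup_{k\to\infty}|V_k|^2\lambda_0\le\pi^2$. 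Second, the worry you raise at the end --- that the smallness of $|V_k|u_0(0)$ alone may not control the eigenvalue comparison and that finer phase information might be needed --- dissolves upon direct computation: using the symmetry $u_0(-1)=u_0(1)$,
\begin{equation*}
q_{w,u}[u_0]-q_{w,u}[\tilde u_0]=(2+u)\,u_0(0)^2-4\,u_0(0)u_0(1)\ ,
\end{equation*}
and the eigenvalue equation at the zero vertex (displayed in the proof of Lemma~\ref{PropXXX}) gives $u_0(1)=\bigl(1+\tfrac{u-\lambda_0}{2}\bigr)u_0(0)$, so every term is $O\bigl(u_0(0)^2\bigr)=o(|V_k|^{-2})$ by Lemma~\ref{PropXXX}; together with $\|\tilde u_0\|_{\mathcal{H}}^2=1-u_0(0)^2$ and $\lambda_0=O(|V_k|^{-2})$ this yields $q_{w,u}[\tilde u_0]/\|\tilde u_0\|_{\mathcal{H}}^2=\lambda_0+o(|V_k|^{-2})$. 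With the detour of the first point removed, no phase control is needed anywhere.
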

	\begin{proof} We prove the statement by contradiction and assume that $\lim_{j \rightarrow \infty}|V_{k_j}|^2 \cdot \Gamma(V_{k_j},w,u) > 0$ for a subsequence $(k_j)_{j \in \mathbb{N}}$ which means that $|V_{k_j}|^2\lambda_0(V_{k_j},w,u)$ converges along this subsequence (since $|V_{k_j}|^2\lambda_1(V_{k_j},w,u)$ also converges). 
		
		We start with the basic relation
		\begin{equation*}
		|V_{k_j}|^2\cdot q_{w,u}[u_0]=|V_{k_j}|^2\cdot \lambda_0(V_{k_j},w,u)\ .
		\end{equation*}
		Now, we denote by $\tilde{u}_0 \in \mathcal{H}$ the vector which is obtained from the normalized ground state $u_0 \in \mathcal{H}$ by setting its zeroth component $u_0(0)$ to zero. Employing Proposition~\ref{PropXXX} we conclude
		\begin{equation*}\begin{split}
		\lim_{j \rightarrow \infty}|V_{k_j}|^2\cdot \lambda_0(V_{k_j},w,u)&=\lim_{j \rightarrow \infty}|V_{k_j}|^2\cdot\frac{ q_{w,u}[u_0]}{\|u_0\|^2_{\mathcal{H}}}\ ,\\
		&=\lim_{j \rightarrow \infty}|V_{k_j}|^2\cdot \frac{ q_{w,u}[\tilde{u}_0]}{\|\tilde{u}_0\|^2_{\mathcal{H}}}\ ,
		\end{split}
		\end{equation*}
		noting that all contributions in $q_{w,u}[u_0]$ related to the zero vertex vanish in the limit.
		
		Also, since $\tilde{u}_0$ is zero at the zero vertex, we may restrict $\tilde{u}_0$ to the vertices $\{-k,...,0\}$ and continue it by zero to the vertices $\{0,...,+k\}$. In the same way we consider its restriction to $\{0,...,+k\}$, continued by zero to the vertices $\{-k,...,0\}$. Using that these two restrictions span a two-dimensional subspace of $\mathcal{H}$, the minmax-principle \cite{schmudgen2012unbounded} readily yields
		\begin{equation*}\begin{split}
		\lim_{j \rightarrow \infty}|V_{k_j}|^2\cdot \lambda_1(V_{k_j},w,u)&\leq \lim_{j \rightarrow \infty}|V_{k_j}|^2\cdot\frac{ q_{w,u}[\tilde{u}_0]}{\|\tilde{u}_0\|^2_{\mathcal{H}}}
		\end{split}
		\end{equation*}
		which implies $\lim_{j \rightarrow \infty}|V_{k_j}|^2 \cdot \Gamma(V_{k_j},w,u)= 0$. This leaves us with a contradiction.
	\end{proof}
	\begin{remark} In \cite{KernerTaufer} a conjecture regarding the decay rate of the spectral gap for non-negative and compactly supported potentials was put forward, see also \cite{KernerCompact}. In the context of the present paper and for the operator considered in Theorem~\ref{MainTheoremI}, it again seems plausible that $\Gamma(V_k,w,u)$ cannot close faster than $\sim |V_k|^{-3}$. 
	\end{remark}
	Finally, we establish a result that is directly related to the statement of Theorem~\ref{MainTheoremI}. More explicitly, we shall show that the spectral gap closes also strictly faster than the spectral gap of the unweighted path graph (compare with Proposition~\ref{PropPavloI}) if one is allowing for fast decaying edge weights instead of considering an external potential.
	\begin{theorem}[Upper bound spectral gap]\label{TheoremDecayWeights} Consider a path graph with Hamiltonian $H_{\gamma}$, $u=0$ and double-symmetric edge weights $\gamma_{vw}=\gamma_{wv}$, $\gamma_{vw}=\gamma_{-v-w}$, $v,w \in V_k$, also satisfying the decay condition
		\begin{equation*}
		\gamma_{n(n+1)} \leq \frac{C}{n^\mu}\ , \quad n \in \mathbb{N}\ ,
		\end{equation*}
		for some constants $C > 0$ and $\mu > 1$. Then, there exists $\varepsilon > 0$ and a constant $B > 0$ such that 
		\begin{equation}\label{GZT}
		\Gamma(V_k,w,u) \leq \frac{B}{|V_k|^{2+\varepsilon}}\ 
		\end{equation}
		for all $k$ large enough.
	\end{theorem}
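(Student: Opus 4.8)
The plan is to reduce everything to a single well-chosen test vector via the min--max principle. Since $u=0$, the constant vector $\eins=(1,\dots,1)^T$ satisfies $q_w[\eins]=0$, so $\lambda_0(V_k,w,0)=0$ and hence $\Gamma(V_k,w,0)=\lambda_1(V_k,w,0)$. Using the non-degeneracy of the ground state (Proposition~\ref{SymmetryGS}), the Courant--Fischer principle \cite{schmudgen2012unbounded} gives
$$
\lambda_1(V_k,w,0)=\min_{\substack{0\neq f\in\mathcal H\\ f\perp\eins}}\frac{q_w[f]}{\|f\|^2_{\mathcal H}}\ ,
$$
so it suffices to exhibit one vector $f\perp\eins$ whose Rayleigh quotient is $\leq B\,|V_k|^{-(2+\varepsilon)}$. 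Because the weights are double-symmetric, I would work in the antisymmetric sector: any $f$ with $f(-v)=-f(v)$ (in particular $f(0)=0$) is automatically orthogonal to $\eins$, and a direct bookkeeping of the edge contributions yields, for such $f$,
$$
q_w[f]=2\gamma_{01}\,|f(1)|^2+2\sum_{n=1}^{k-1}\gamma_{n(n+1)}\,|f(n+1)-f(n)|^2\ ,\qquad \|f\|^2_{\mathcal H}=2\sum_{n=1}^{k}|f(n)|^2\ .
$$

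The key idea is to exploit the decay $\gamma_{n(n+1)}\leq C n^{-\mu}$ of the weights towards the ends: the outer edges are very weakly coupled, so a test vector may carry a large amount of mass far out while paying only a tiny Dirichlet energy. Concretely I would take
$$
f(n):=\sgn(n)\,|n|^{s}\ ,\qquad f(0):=0\ ,
$$
with a fixed exponent $s>\tfrac{1+\mu}{2}$ (for instance $s=\mu+1$, which also satisfies $s\geq1$). Using $|(n+1)^s-n^s|\leq s\,2^{s-1}n^{s-1}$ for $n\geq1$ together with the decay bound, the energy is controlled by
$$
q_w[f]\leq 2\gamma_{01}+C'\sum_{n=1}^{k-1}n^{2s-2-\mu}\leq \tilde C\,k^{2s-1-\mu}\ ,
$$
where the choice $s>\tfrac{1+\mu}{2}$ guarantees $2s-2-\mu>-1$, so the sum is dominated by its tail and the bounded central contribution $2\gamma_{01}$ is negligible for large $k$. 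On the other hand $\|f\|^2_{\mathcal H}=2\sum_{n=1}^k n^{2s}\geq c\,k^{2s+1}$.

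Balancing the two estimates, the amplitude exponent $2s$ cancels and one obtains
$$
\frac{q_w[f]}{\|f\|^2_{\mathcal H}}\leq \frac{\tilde C}{c}\,k^{-(2+\mu)}\ .
$$
Since $|V_k|=2k+1$, this is $\leq B\,|V_k|^{-(2+\mu)}$ for a suitable $B>0$ and all $k$ large, establishing \eqref{GZT} with $\varepsilon=\mu$, which is positive (indeed $>1$) by hypothesis. The main obstacle, and the conceptual point of the argument, is recognizing that the surplus rate $\varepsilon$ one obtains is governed by the decay exponent $\mu$: the cancellation of $2s$ forces the ratio to saturate at $k^{-(2+\mu)}$ for every sufficiently fast-growing profile, while the free value $\pi^2/|V_k|^2$ of Proposition~\ref{PropPavloI} corresponds to the borderline of constant weights. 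The only genuinely delicate step in executing the plan is the bookkeeping that confirms tail-domination of the energy sum and absorbs the uncontrolled central weight $\gamma_{01}$; both are handled by taking $s$ large enough, after which the estimate reduces to a comparison of power sums.
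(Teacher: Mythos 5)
Your proof is correct, but it takes a genuinely different route from the paper's. The paper also argues variationally, but with a plateau-type test vector $f_k=(+1,\dots,+1,0,\dots,0,+1,\dots,+1)^T$, equal to $1$ on the outer $\lceil k/4\rceil$ vertices at each end and $0$ in the middle: its energy consists of exactly two jump terms, located at edges of index $\approx 3k/4$ where the decay hypothesis gives weight $\lesssim k^{-\mu}$, so its Rayleigh quotient is $\lesssim k^{-(1+\mu)}$ and \eqref{GZT} follows with $\varepsilon=\mu-1$ --- this is precisely where the hypothesis $\mu>1$ is used. Your antisymmetric power profile $f(n)=\sgn(n)|n|^{s}$ spreads the oscillation over the whole graph and, after the cancellation of the amplitude exponent $2s$, yields the strictly better bound $\lesssim k^{-(2+\mu)}$, i.e.\ $\varepsilon=\mu$; moreover your argument never needs $\mu>1$, only $\mu>0$, so it proves a stronger conclusion under a weaker hypothesis. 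A second genuine difference is the handling of orthogonality: you work in the antisymmetric sector, where $f\perp\eins$ is automatic, so the Courant--Fischer characterization of $\lambda_1$ applies directly; the paper instead applies the min--max inequality to the single vector $f_k$, which is \emph{not} orthogonal to the constant ground state, so strictly speaking its step needs the small repair of either antisymmetrizing (using $f_L-f_R$) or passing to the two-dimensional span of the two end blocks --- your formulation avoids this issue entirely. One small caveat on your side: the central weight $\gamma_{01}$ is not covered by the decay hypothesis (which starts at $n=1$), so absorbing the term $2\gamma_{01}$ is legitimate only under the natural reading that the weight sequence is fixed, independent of $k$; if one wanted robustness against $k$-dependent central weights, you could start the profile one step out, e.g.\ $f(n)=\sgn(n)(|n|-1)^{s}$ for $|n|\geq 1$, which kills that term while changing nothing else, whereas the paper's proof is insensitive to $\gamma_{01}$ from the outset.
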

	\begin{proof} We start by recalling that $\lambda_0(V_k,w,u)=0$ for all $k \in \mathbb{N}$. Hence, in order to show~\eqref{GZT} it remains to bound $\lambda_1(V_k,w,u)$. To do this, we employ the minmax-principle and obtain
		\begin{equation*}
		\lambda_1(V_k,w,u) \leq \frac{q_{w,u}[f_k]}{\|f_k\|^2_{\mathcal{H}}}
		\end{equation*}
		where $f_k=(+1,+1,+1,+1,...,0,0,0,...,+1,+1,+1,+1)^T$ has $2\lceil \frac{k}{4}\rceil$ non-zero components. We obtain,
		\begin{equation*}\begin{split}
		\lambda_1(V_k,w,u) &\leq \frac{2C(k-\lceil \frac{k}{4}\rceil)^{-\mu}}{2\lceil \frac{k}{4}\rceil}  \\
		&\leq \frac{\tilde{C}}{k^{1+\mu}}
		\end{split}
		\end{equation*}
		for some $\tilde{C} > 0$ and $k$ large enough. This implies the statement.
	\end{proof}
	\begin{remark} Under the assumptions of Theorem~\ref{TheoremDecayWeights} one has
		\begin{equation*}
		\lim_{k \rightarrow \infty}|V_k|^2\cdot \Gamma(V_k,w,u)=0\ .
		\end{equation*}
		As a consequence, the spectral gap of the weighted Laplacian with edge weights decaying sufficiently fast decays also faster than the spectral gap of the unweighted Laplacian (compare with Proposition~\ref{PropPavloI}). This faster decay of the spectral gap is again a consequence of an effective decoupling of the configuration space in the limit of large volume. In other words, sufficiently fast decaying edge weights lead to a graph that is effectively disconnected in the infinite volume limit. This implies an effective degeneracy of the ground state eigenvalue and this yields a faster decay of the spectral gap.
	\end{remark}

%	\vspace*{0.5cm}
	
	\subsection*{Acknowledgement}{The first author would like to thank D.~Bianchi for interesting discussions. The second author was supported by the project PRIMUS/20/SCI/002 from Charles University.}
	
	\vspace*{0.5cm}
	
	{\small
		\bibliographystyle{amsalpha}
		\bibliography{Literature}}

\end{document}